\theoremstyle{plain}
\theoremstyle{plain}
\newtheorem{theorem}{Theorem} [section]
\newtheorem{lemma}[theorem]{Lemma}
\newtheorem{proposition}[theorem]{Proposition}
\theoremstyle{definition}
\theoremstyle{remark}
\newtheorem{remark}[theorem]{Remark}
\numberwithin{theorem}{section}
\numberwithin{equation}{section}
\numberwithin{figure}{section}
\newcommand{\pl}{\left(}
\newcommand{\pr}{\right)}
\newcommand{\Ric}{\text{\rm Ric}}  
\newcommand{\Sph}{\mathbb{S}}
\newcommand{\He}{\text{\rm Hess}}
\begin{document}

\title[Metrics with non-negative Ricci on convex three-balls]{Metrics with non-negative Ricci curvature on convex three-manifolds}
\author[Antonio Ach\'{e}]{Antonio Ach\'{e}}
\address{Department of Mathematics, Princeton University, Fine Hall, Washington Road, Princeton, NJ 08544}
\email{aache@math.princeton.edu}

\author[Davi Maximo]{Davi M{a}ximo}
\address{Department of Mathematics, Stanford University, 450 Serra Mall, Bldg 380, Stanford, CA 94305}
\email{maximo@math.stanford.edu}

\author[Haotian Wu]{Haotian Wu}
\address{Department of Mathematics, University of Oregon, Eugene, OR 97403}
\email{hwu@uoregon.edu}

\thanks{AA was partially supported by a postdoctoral fellowship of the
National Science Foundation, award No. DMS-1204742.}

\date{}
\begin{abstract}
We prove that the space of smooth Riemannian metrics on the three-ball with non-negative Ricci curvature and strictly convex boundary is path-connected; and, moreover, that the associated moduli space ($i.e.$, modulo orientation-preserving diffeomorphisms of the three-ball) is contractible. As an application, using results of Maximo, Nunes, and Smith \cite{MaNuSm13}, we show the existence of properly embedded free boundary minimal annulus on any three-ball with non-negative Ricci curvature and strictly convex boundary.

\end{abstract}
\maketitle

\section{Introduction}


Let $M$ be a compact three-manifold with non-empty boundary $\partial M =D$. We consider smooth Riemannian metrics on $M$ that have non-negative Ricci curvature and strictly convex\footnote{We recall that a Riemannian manifold $(M,g)$ is said to have strictly convex boundary if every boundary point has strictly negative second fundamental form with respect to the outward-pointing unit normal. This infinitesimal definition of convexity is equivalent to other more geometric conditions, see Bishop \cite{Bis74}.} non-empty boundary $\partial M$. By a variational argument developed in Meeks, Simon, and Yau \cite{MeSiYa82}, such metrics can exist if and only if $M$ is diffeomorphic to the three-ball in $\mathbb{R} ^{3}$ (cf. Fraser and Li \cite{FrMa14}). In this note, we are interested in studying the space of such metrics. We prove:

\begin{theorem}\label{thmmain}
The space of smooth Riemannian metrics on the three-ball ${M}^3$ with non-negative Ricci curvature and strictly convex boundary is path-connected. Moreover, the associated moduli space ($i.e.$, modulo orientation-preserving diffeomorphisms of the three-ball) is contractible. 
\end{theorem}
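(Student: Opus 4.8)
\medskip
\noindent\emph{Proof proposal.}
The plan is to build a canonical deformation carrying an arbitrary metric $g$ with $\Ric(g)\ge 0$ and strictly convex boundary to a metric of constant curvature on the ball, and then to analyze the resulting small family of models. The engine should be the Ricci flow $\partial_t g=-2\Ric(g)$ in the interior of $M$, coupled to a boundary condition compatible with convexity --- the natural candidate is Shen's condition that $\partial M$ stay totally umbilic, with its umbilicity factor evolving by the induced heat-type equation (possibly after a preliminary deformation, within the class, that makes $\partial M$ umbilic). Three things then have to be checked: (i) short-time existence and smoothness of this initial--boundary value problem, e.g.\ via the DeTurck trick coupled to the boundary condition; (ii) that the flow preserves \emph{both} $\Ric\ge 0$ and strict convexity of $\partial M$; and (iii) the long-time behaviour.

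I expect (ii) to be the technical core. In the interior one applies Hamilton's tensor maximum principle to the evolution of $\Ric$: in dimension three the curvature operator obeys a reaction--diffusion system whose associated ODE keeps $\{\Ric\ge 0\}$ invariant, so positivity can only be lost first on $\partial M$; there strict convexity contributes the favourable sign in the Hopf-type boundary term, the second fundamental form appearing as a nonnegative Neumann contribution. Symmetrically, writing out the evolution of the second fundamental form of $\partial M$ and checking that its reaction term has the right sign once $\Ric\ge 0$ shows that strict convexity persists. Granting this, every time slice of the flow lies in the space under study, and the flow is itself a path.

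For (iii): if $g$ is flat then $R(g)\equiv 0$, the flow is stationary, and the developing map realizes $(M,g)$ as a Euclidean convex body (Hadamard's theorem on surfaces with definite second fundamental form), which is joined to the round ball through Minkowski interpolations inside the class. If $g$ is not flat then $R(g)>0$ somewhere; by the strong maximum principle $R$ becomes everywhere positive, and, $M$ being a ball, the flow contracts it to a point in finite time $T$. Rescaling by the diameter and invoking Hamilton-type three-dimensional pinching --- available since $R_{\min}>0$ and $\Ric\ge 0$ improve to a definite pinching --- the rescaled metrics converge to a round geodesic ball in $\Sph^3$, of radius $<\pi/2$ once one arranges that the umbilicity factor stays uniformly positive after rescaling. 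Having reached a metric $C^\infty$-close to such a model, one interpolates (after a diffeomorphism) to the exact model, staying in the class since its defining conditions are open. Concatenating, every $g$ is joined to the family $\mathcal{R}$ of metrics on $M$ isometric to a rescaled geodesic ball in $\Sph^3$ of radius $<\pi/2$ or to a rescaled flat ball; since the flat ball is the rescaled radius-to-zero limit, $\mathcal{R}$ is path-connected, hence so is the whole space.

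For the moduli statement one verifies that the construction --- flow, rescaling, diffeomorphism normalization near the model, interpolation --- depends continuously on $g$ (Ricci flow depends smoothly on initial data over compact time intervals, and the normalization is by a continuously varying diffeomorphism because the class is locally a graph over the model modulo $\mathrm{Diff}^+(M)$) and is equivariant under $\mathrm{Diff}^+(M)$; it therefore descends to a deformation retraction of the moduli space onto $\mathcal{R}/\mathrm{Diff}^+(M)$, which --- the center, frame and orientation of a model ball being absorbed by the group --- is parametrized by radius and scale alone and is contractible. The principal obstacles are thus the choice of boundary condition making (i) and (ii) work, the boundary maximum principle preserving convexity and $\Ric\ge 0$, the convergence of the rescaled flow to a genuinely strictly convex round model, and arranging the continuous, equivariant dependence in the last step; the remaining topological bookkeeping is then routine.
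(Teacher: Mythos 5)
Your plan hinges on running Ricci flow directly on the manifold with boundary, with a Shen-type umbilic boundary condition, and on the two assertions you yourself flag as the ``technical core'': that the flow preserves strict convexity of $\partial M$ and $\Ric\ge 0$ up to the boundary, and that the rescaled flow converges to a round spherical cap. These are precisely the points that are not available. First, Shen's boundary condition \emph{prescribes} the second fundamental form to be a given multiple $\lambda(t)$ of the boundary metric; a general strictly convex boundary is not umbilic, and your ``preliminary deformation, within the class, that makes $\partial M$ umbilic'' is itself an unjustified deformation problem of essentially the same difficulty as the theorem (you would have to keep $\Ric\ge 0$ and strict convexity while homogenizing $\II$, with no mechanism offered). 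Second, under Ricci flow the boundary is fixed and the metric evolves, so $\II(\partial M)$ does not satisfy a clean reaction--diffusion equation to which a tensor maximum principle applies; there is no known argument that strict convexity (or even $\Ric\ge 0$ near $\partial M$, where Hamilton's interior maximum principle needs a boundary condition with the right sign) is preserved. Third, the long-time behaviour and convergence theory for these boundary flows is not established; Hamilton's pinching and convergence machinery is a closed-manifold result, and the Hopf-type boundary terms you invoke have never been made to work in this setting. So steps (i)--(iii) are not checkable details but open problems, and the argument as proposed does not close.

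The paper sidesteps exactly these obstructions. Instead of flowing the manifold with boundary, one first deforms $g$ near $\partial M$: a conformal factor $e^{-2sf}$ built from a carefully chosen boundary-supported function $f$ makes $\Ric>0$ near $D=\partial M$ while keeping $\Ric\ge 0$ and strict convexity (Section 3), and a Perelman-style $C^1$ interpolation in a collar makes the boundary totally geodesic while keeping the curvature condition (Lemma \ref{lemmarc}); one then \emph{doubles} across the now totally geodesic boundary to obtain a smooth metric on $\Sph^3$ with a reflection symmetry, runs Hamilton's classical Ricci flow on this closed manifold (where nonnegative Ricci is preserved, becomes positive, and convergence to the round metric is a theorem), and restricts back to one half, using the preserved reflection symmetry to keep the boundary totally geodesic; finally a small explicit perturbation $\hat g + \eta\varphi(r)\hat g^r$ restores \emph{strict} convexity along the whole path. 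Contractibility then follows by checking that each of these steps depends continuously on the initial metric on compact families, together with path-connectedness of $\mathrm{Diff}_+(B^3)$. If you want to salvage your outline, the doubling trick is the missing idea that converts your boundary-value flow into a closed-manifold flow for which (i)--(iii) are actually theorems.
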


As an intermediate step in the proof of Theorem \ref{thmmain}, we also show that:

\begin{theorem}\label{thmrc+}
The space of smooth Riemannian metrics on the three-ball ${M}^3$ with positive Ricci curvature and strictly convex boundary is path-connected. Moreover, the associated moduli space ($i.e.$, modulo orientation-preserving diffeomorphisms of the three-ball) is contractible.
 \end{theorem}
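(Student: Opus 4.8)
The plan is to exhibit a deformation retraction of the space $\mathcal{R}^{+}$ of metrics on $M^{3}\cong B^{3}$ with positive Ricci curvature and strictly convex boundary onto the orbit, under the group $\mathrm{Diff}^{+}(M)$ of orientation-preserving diffeomorphisms, of a single fixed model metric $g_{*}$, namely the metric of a small geodesic ball $B_{r_{0}}$ (of radius $r_{0}<\pi/2$) in the round unit $\Sph^{3}$. A $\mathrm{Diff}^{+}(M)$-equivariant retraction of this kind yields both conclusions at once: it descends to a retraction of the moduli space onto the image of a single orbit, hence the moduli space is a point; and since every metric in $\mathcal{R}^{+}$ is carried into the orbit $\mathrm{Diff}^{+}(M)\cdot g_{*}\cong \mathrm{Diff}^{+}(M)/\mathrm{Isom}^{+}(g_{*})$, the space $\mathcal{R}^{+}$ is path-connected (in fact weakly contractible, since by Hatcher's solution of the Smale conjecture and Smale's theorem $\mathrm{Diff}^{+}(\Sph^{2})\simeq SO(3)$ one has $\mathrm{Diff}^{+}(M)\simeq SO(3)$, and the inclusion of $\mathrm{Isom}^{+}(g_{*})\cong SO(3)$ is a homotopy equivalence). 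Morally this mirrors Marques' approach to the space of positive scalar curvature metrics on $3$-manifolds, with Hamilton's rounding of $\Ric>0$ metrics on $\Sph^{3}$ playing the role of Ricci flow with surgery.

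\emph{Step 1 (normalization near the boundary).} Since ``$\Ric>0$'' and ``strictly convex boundary'' are open conditions, I would first retract $\mathcal{R}^{+}$, $\mathrm{Diff}^{+}(M)$-equivariantly, onto the subspace $\mathcal{R}^{+}_{0}$ of metrics that coincide with $g_{*}$ on a fixed collar $\partial M\times[0,\varepsilon)$. Concretely: apply a diffeomorphism straightening a collar of $\partial M$; adjust the induced boundary metric to the round one by a conformal factor supported near $\partial M$; then interpolate, in small steps over a partition of unity of $\mathcal{R}^{+}$, the collar metric toward the model collar (the interior neighborhood of $\partial B_{r_{0}}$ in $\Sph^{3}$), matching first the second fundamental form and then all higher jets. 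Each elementary step is a small deformation, so $\Ric>0$ and strict convexity persist; iterating produces the retraction onto $\mathcal{R}^{+}_{0}$.

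\emph{Step 2 (capping off and Ricci flow).} Given $g\in\mathcal{R}^{+}_{0}$, glue onto $(M,g)$ along $\partial M$ the complementary region $N=\Sph^{3}\setminus B_{r_{0}}$ with its round metric. Because $g$ equals the round metric in a collar of $\partial M$, the union $(\Sph^{3},\hat g)$ is smooth and has $\Ric>0$ (positive on $M$ by hypothesis, and on $N$ because $N\subset\Sph^{3}$). Run the normalized Ricci flow on $(\Sph^{3},\hat g)$: by Hamilton's theorem it exists for all time and converges, exponentially in every $C^{k}$ and continuously in the initial data, to a round metric. Composing with the (equivariant) contraction of the space of fixed-volume round metrics on $\Sph^{3}$ to the round metric underlying $g_{*}$ modulo isometry, and pulling back along a continuous family of diffeomorphisms tracking the fixed region $M$ along the flow, I would obtain the desired retraction of $\mathcal{R}^{+}_{0}$ onto $\mathrm{Diff}^{+}(M)\cdot g_{*}$, and hence of $\mathcal{R}^{+}$.

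The main obstacle is Step 2, specifically controlling the \emph{boundary} $\partial M$ along the Ricci flow. Hamilton's convergence only guarantees that $\partial M\subset(\Sph^{3},\hat g(t))$ becomes asymptotically a round, strictly convex geodesic sphere; during the transient regime the induced second fundamental form on $\partial M$ could in principle degenerate, so one cannot merely restrict $\hat g(t)|_{M}$ and expect a path inside $\mathcal{R}^{+}$. To get around this one needs either (i) a quantitative a priori estimate — plausibly via a maximum principle for the evolution of the second fundamental form of the level sets of the distance to $\partial M$, exploiting that $\hat g$ is \emph{exactly} round near $\partial M$ at $t=0$ — showing that strict convexity of $\partial M$ is preserved for all time provided $\hat g$ is sufficiently round on $N$; or (ii) replacing the closed-manifold flow by a Ricci flow on $M$ itself subject to a boundary condition (prescribed mean curvature, or a convex free-boundary condition) that preserves strict convexity and still converges to $g_{*}$, at the cost of invoking the existence-and-convergence theory for Ricci flow on manifolds with boundary. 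A secondary point, routine but to be checked, is that every construction above — the collar interpolation, the capping, the flow, and the tracking diffeomorphisms — can be made continuous in $g$ and $\mathrm{Diff}^{+}(M)$-equivariant, which is what upgrades path-connectedness to contractibility of the moduli space.
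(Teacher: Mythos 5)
Your outline founders on the two steps you yourself identify as the crux, and neither admits the quick fix you suggest. In Step 1, deforming an arbitrary $g$ so that it coincides with a fixed model round collar near $\partial M$ is not a ``small deformation'': the induced metric on $\partial M$ and the second fundamental form of a general metric with $\Ric>0$ and strictly convex boundary can be far from those of a geodesic sphere in $\Sph^3$, so interpolating the collar jets toward the model (or conformally rounding the boundary metric) is a large change, and there is no argument offered for why positivity of Ricci survives it; this normalization is essentially as hard as the original problem. In Step 2, after capping off and running Hamilton's flow on the closed manifold, nothing controls the second fundamental form of the fixed hypersurface $\partial M$ during the transient regime, as you concede; the proposed remedies (a maximum principle for the level sets of the distance to $\partial M$, or a boundary-value Ricci flow) are speculative and not available in the literature at the required generality. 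So as written the proposal does not close.

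The paper's proof is built precisely to avoid both difficulties, and the device it uses is absent from your plan: instead of matching a model collar, one doubles $(M,g)$ across $\partial M$ and smooths the double using a Perelman-type gluing, interpolating the metric's \emph{own} data $g^{-\rho}$, $g^{\rho}$ by a quadratic polynomial in the normal direction; strict convexity forces $(\bar g^r_\rho)''<-\tfrac{\Lambda}{\rho}\bar g^r_\rho$, which makes the radial curvatures large positive for small $\rho$ and yields $\Ric>0$ (Lemma 2.3), while explicit ``shifts'' along equidistant surfaces connect $g$ to the resulting metric $g_2$ with totally geodesic boundary through metrics with convex boundary. The doubled metric has a reflection symmetry across $D=\partial M$, which Ricci flow preserves, so $D$ stays totally geodesic for all time and Hamilton's convergence gives a path (in the class with merely convex boundary) to the round hemisphere; strict convexity is then restored along the whole flow path by an explicit perturbation $\hat g(t)+\eta\varphi(r)\hat g^r_t$ supported near $D$, together with further shifts. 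This is exactly the substitute for your missing boundary control: the symmetry removes the need to estimate $\mathrm{II}(\partial M)$ along the flow, and the final perturbation recovers strictness. Your equivariance/continuity remarks for the moduli space do match the paper's strategy in spirit, but without the doubling-and-reflection mechanism (or a genuinely new boundary estimate) the core of your argument has a gap.
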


The study of the topology of the space of metrics satisfying certain curvature conditions has a long history. In 1916, Weyl \cite{Weyl16} showed that the space of metrics with positive scalar curvature on the two-sphere $S^2$ is path-connected and while his proof is clearly two-dimensional (as it uses the Uniformization Theorem for surfaces), it is natural to ask whether or not analogues to Weyl's result could hold in higher dimensions. Using Ricci flow on closed three-manifolds, Hamilton \cite{Ham82} showed that the space of metrics with positive Ricci curvature is path-connected. More recently, Marques \cite{Mar12}, using Ricci flow with surgeries, proved the path-connectedness of the space of metrics with positive scalar curvature on three-manifolds. 

The picture in higher dimensions is quite different. As first observed in the work of Hitchin \cite{Hit74}, the spaces of metrics of positive scalar curvature on the spheres $S�^{8k}$ and $S�^{8k+1}$, respectively, are disconnected for each $k\geq1$. Some years later, Carr \cite{Car88} proved that the space of metrics with positive scalar curvature on $S�^{4k-1}$ has infinitely many connected components for each $k\geq2$, see also \cite{GrLa83}. This was strengthened by Kreck and Stolz \cite{KrSt93}, who proved that the space of such metrics has infinitely many connected components for $k\geq 2$ even modulo diffeomorphisms ($i.e.$, the moduli space), see also \cite{BoGi96}. Similar results were also obtained for Ricci curvature by Wraith \cite{Wra11}, who proved that there are closed manifolds in infinitely many dimensions for which the moduli space of metrics with positive Ricci curvature has infinitely many components.


Finer aspects of the topology of the space of metrics, such as the fundamental group or higher homotopy groups, have also been studied. For instance, Rosenberg and Stolz \cite{RS01} have shown that the space of metrics with positive scalar curvature on $S^2$ is actually contractible; and it is not hard to prove, using Hamilton's Ricci flow \cite{Ham82}, that so is the moduli space of metrics with positive Ricci curvature on closed three-manifolds. In higher dimensions, the higher homotopy groups for the space of metrics with positive scalar curvature have been studied for certain closed manifolds, see for example \cite{BHSW10,HaScSt14}, and also recently for manifolds with boundary by Walsh \cite{Wal14}.

The question we answer in Theorem \ref{thmmain} came to our attention after the recent solution by Maximo, Nunes, and Smith \cite{MaNuSm13} to a problem of Jost \cite{Jos88}: every strictly convex domain of $\mathbb{R}�^{3}$ contains a properly embedded free boundary minimal annulus. Here, a smooth compact surface $\Sigma$ in $(M,g)$ with $\partial\Sigma\subseteq\partial M$ is said to be {minimal} with free boundary whenever it has zero mean curvature and $T\Sigma$ is orthogonal to $T\partial M$ at every point of $\partial\Sigma$. The argument used in \cite{MaNuSm13} is based on degree theoretic considerations and works for any metric with non-negative Ricci curvature and suitably convex boundary. As an application of Theorem \ref{thmmain}, it is not hard to extend their result to any metric with non-negative Ricci curvature and strictly convex boundary in the usual sense (assuming the main results of \cite{MaNuSm13}, we provide the details of Theorem \ref{thmmns} in Section \ref{sec:app}): 

\begin{theorem}\label{thmmns}
Let $g$ be a Riemannian metric on the three-ball $M^3$ with non-negative Ricci curvature and strictly convex boundary. There exists a properly embedded free boundary minimal annulus in $M^3$.
\end{theorem}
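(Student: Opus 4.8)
The plan is to feed the path-connectedness of Theorem \ref{thmmain} into the degree-theoretic existence scheme of Maximo, Nunes, and Smith \cite{MaNuSm13}. Recall that in \cite{MaNuSm13} one works on a suitable Banach manifold of normalized embedded annuli in $M^3$ with free boundary, and considers the section whose zeros are the properly embedded free boundary minimal annuli; after a generic (Sard--Smale) perturbation this zero set is a compact $0$-manifold, and one attaches to the metric $g$ the resulting $\mathbb{Z}/2$-valued degree $d(g)$ (an integer-valued degree if one keeps track of orientations). The two hypotheses $\Ric_g\ge 0$ and $\II<0$ on $\partial M$ are precisely what makes this count well posed: they supply the area bound together with the interior and boundary curvature estimates, and the barrier/maximum-principle arguments that prevent a sequence of such minimal annuli from collapsing its core curve, from having its two boundary circles coalesce or run off, or from losing embeddedness. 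Thus $d(g)$ is defined for every $g$ in the class $\mathcal{M}$ of metrics on $M^3$ with non-negative Ricci curvature and strictly convex boundary.

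The first step is to observe that $d$ is locally constant on $\mathcal{M}$, hence constant on path components. Given a continuous path $\{g_t\}_{t\in[0,1]}\subset\mathcal{M}$, compactness of $[0,1]$ forces a uniform lower bound on the convexity of $\partial M$ and uniform control of the relevant diameter- and injectivity-type quantities, so the a priori estimates of \cite{MaNuSm13} hold with $t$-independent constants; the parametrized zero set is then compact and the standard cobordism argument gives $d(g_0)=d(g_1)$. (If convenient one first carries this out on the open subclass with $\Ric_g>0$, using Theorem \ref{thmrc+}, and then reaches the non-negative case by approximation, using that strict convexity is an open condition.)

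The second step is to produce one $g_0\in\mathcal{M}$ with $d(g_0)\ne 0$. For this one takes a flat strictly convex domain $\Omega\subset\mathbb{R}^3$ of the kind treated in \cite{MaNuSm13} --- for instance a sufficiently elongated ellipsoid --- for which the existence, and after perturbation the nonvanishing of the count, of a properly embedded free boundary minimal annulus is part of the main results of \cite{MaNuSm13}. Since $\Ric\equiv 0$ and $\II<0$ on $\partial\Omega$, we have $g_0\in\mathcal{M}$. Now Theorem \ref{thmmain} says $\mathcal{M}$ is path-connected, so any $g\in\mathcal{M}$ is joined to $g_0$ by a path in $\mathcal{M}$; combining with the first step, $d(g)=d(g_0)\ne 0$, and in particular the zero set for $g$ is nonempty. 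This produces a properly embedded free boundary minimal annulus in $(M^3,g)$.

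The main obstacle is the uniformity of compactness along the path: one must check that the quantitative estimates of \cite{MaNuSm13} --- above all the lower bound on the length of the core geodesic and the quantitative separation of the two boundary components --- depend on the metric only through the Ricci lower bound and the convexity constant, so that they persist across a compact family of metrics in $\mathcal{M}$; granting this, the Fredholm/transversality setup and the cobordism invariance of the degree are routine. A minor additional point is that the model domain must be taken strictly convex in the usual sense, which is immediate for an ellipsoid.
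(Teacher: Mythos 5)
Your overall strategy (feed path-connectedness into the degree theory of \cite{MaNuSm13}) is the same as the paper's, but your primary implementation has a genuine gap: you define the degree $d(g)$ and invoke its homotopy invariance directly on the class $\mathcal{M}$ of metrics with $\Ric_g\ge 0$ and strictly convex boundary. This class is not open in the space of metrics, and the degree-theoretic machinery of \cite{MaNuSm13} (Sard--Smale perturbations to achieve transversality, and properness of the relevant projection over the admissible metrics, which is what makes the cobordism argument run) requires an open class over which the a priori compactness holds: a generic small perturbation of a metric with $\Ric\ge 0$ need not have $\Ric\ge 0$, and once one leaves the class the estimates preventing degeneration of the annuli are no longer available. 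This is precisely why the paper runs the degree argument only on the open class of metrics with \emph{positive} Ricci curvature and strictly convex boundary, with Theorem \ref{thmrc+} (not Theorem \ref{thmmain}) as the path-connectedness input.

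Your parenthetical fallback (do the degree argument for $\Ric_g>0$ and then ``reach the non-negative case by approximation'') is in fact the paper's route, but as stated it omits its two substantive ingredients. First, it is not automatic that a metric with $\Ric\ge 0$ and strictly convex boundary can be $C^\infty$-approximated by metrics with $\Ric>0$ and strictly convex boundary: the openness of strict convexity is not the issue; producing positive Ricci at all is, and the paper devotes a proposition in Section \ref{sec:app} to this, combining the conformal deformation $e^{-2sf}g$ of Section \ref{sect3} (which makes the Ricci curvature positive in a collar of the boundary while keeping it non-negative elsewhere and the boundary strictly convex) with the Aubin--Ehrlich local deformation \cite{Aub70,Ehr76} to spread the positivity into the interior. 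Second, given free boundary minimal annuli $\Sigma_k$ for the approximating metrics $g_k$, you still need a compactness theorem to extract a limit that is a properly embedded free boundary minimal annulus for $g$ itself; the paper invokes the Fraser--Li compactness results \cite{FrMa14} for this. Without these two steps the non-negative case does not follow from the positive one. (Your model of nonzero degree, an elongated ellipsoid in $\mathbb{R}^3$, is harmless but unnecessary once one quotes the main theorem and degree computation of \cite{MaNuSm13}.)
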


We finish the introduction by saying a few words about the proof of Theorem \ref{thmmain} and the ideas involved. Let us first explain the proof for the path-connectedness, which is the first as well as the essential step in our proof. One could imagine a possible approach to this problem by using Ricci flow with boundary, or by flowing the boundary inward through mean curvature flow, or even by running Ricci flow coupled with mean curvature flow. While there have been interesting recent results under these settings, for instance \cite{Hui85, Gia12, Lot12, Bre13}, they do not seem to work for the question at hand. Indeed, the long-time behavior of the flows proposed in \cite{Gia12} and \cite{Lot12} is not well understood, and also there is no reason why the convexity of the boundary should be preserved while flowing it inside by mean curvature in \cite{Bre13}. Therefore, we pursue a different strategy. Our first step, which might be of independent interest, is to use an idea of Perelman to deform the metric near the boundary making it totally geodesic while maintaining (not necessarily strict) convexity and non-negative Ricci curvature. We then glue two copies of the deformed manifolds along the boundary, obtaining a closed manifold diffeomorphic to the three-sphere $\Sph�^{3}$ with a metric of non-negative Ricci curvature and a reflection symmetry across the surface along which the gluing occurs. The idea is then to run Ricci flow on this glued manifold, and to argue by Hamilton's result \cite{Ham82} that it will flow to a round sphere (after normalization). Since the reflection symmetry is preserved along Ricci flow, we can then find a path of metrics with non-negative Ricci curvature and convex boundary (not necessarily strictly so, since the boundary will be totally geodesic along the flow) from the original metric to the standard round metric of a hemisphere of $\Sph^3$, and we show that such path can always be deformed into a path of metrics with non-negative Ricci curvature and strictly convex boundary. This whole procedure works modulo actions of the group of orientation-preserving diffeomorphisms on the three-ball, which is path-connected by Cerf \cite{Cerf}.

Once the path-connectedness is established, we deal with the higher homotopy groups. To this end, we show that, for any fixed compact set of initial metrics, the above procedure can be done in such a way that the paths obtained depend continuously on the initial data.  

The paper is organized as follows. Section \ref{sect2} is devoted to the proof of Theorem \ref{thmrc+}. We then prove Theorem \ref{thmmain} in Section \ref{sect3}. In Section \ref{sec:app}, we explain the proof of Theorem \ref{thmmns}. We include some computations in the Appendix for completeness.

\subsection*{Acknowledgments} We are grateful to S\o ren Galatius, James Isenberg, Richard Schoen, and Gang Tian for their interest in and helpful discussions on this project. We also thank the editors and the referee for valuable comments on our manuscript.

\section{Proof of Theorem \ref{thmrc+}}\label{sect2}
Let $\mathcal{C}$ be the space of smooth Riemannian metrics on the three-ball ${M}^3$ with positive Ricci curvature and strictly convex boundary. Let $\mathcal{C}_0$ be the space of smooth Riemannian metrics on the three-ball ${M}^3$ with positive Ricci curvature and convex boundary. Note that since we do not necessarily assume the boundary convexity to be strict for metrics in $\mathcal{C}_{0}$, $\mathcal{C}$ is a strict subset of  $\mathcal{C}_{0}$.

We will use a certain type of metric deformation which we call {\it shifts} that can be defined as follows. Given a metric $g$ on $M$, we consider the equidistant surfaces to the boundary $D=\partial M$. Suppose that $\Sigma$ is one of such surfaces and that the distance with respect to $g$ from $\Sigma$ to the boundary is $\varepsilon$. If $\varepsilon$ is sufficiently small, $\Sigma$ must be smooth and isotopic to $D$ in $M$. Moreover, the set of points whose distance to $D$ is greater than or equal to $\varepsilon$ defines a three-manifold $N$ with boundary $\partial N=\Sigma $. It is clear that $(M,\partial M)$ is diffeomorphic to $(N,\partial N)$ as both manifolds are diffeomorphic to $(B^3, S^2)$. An {\it $\varepsilon$-shift} of $g$ is then the metric obtained by restricting $g$ to $(N,\partial N)$ and then pulling it back to $(M,\partial M)$ by the above diffeomorphism. Note that if we can do a shift for a distance $\varepsilon$, then we can also do shifts for any distance smaller than $\varepsilon$, and moreover, we obtain a path between $g$ and the shifted metrics. Furthermore, if $g\in\mathcal{C}$ and the shift parameter is sufficiently small, with smallness depending on $g$, all the shifted metrics will also belong to $\mathcal{C}$. 

We will also deform metrics by Ricci flow $-$ after appropriate doubling of $M$. Note that both Ricci flow and shifts are well-defined up to actions of the group $\text{Diff}_{+} := \text{Diff}_+(B^3)$ of orientation-preserving diffeomorphisms of the three-ball. Our goal is to show that $\mathcal C$ is path-connected and $\mathcal{C}/ \text{Diff}_+$ is contractible. 

\subsection{$\mathcal{C}$ is path-connected}\label{secCpath} Since $\text{Diff}_+$ is path-connected \cite{Cerf}, we may ignore its action over $\mathcal{C}$ through out the remainder of Section \ref{secCpath}. We will show that any metric $g$ in $\mathcal{C}$ can be connected through a path in $\mathcal{C}$ to a metric on some strictly convex round spherical cap. This is carried out in three steps. Given any $g \in \mathcal{C}$, we prove:
\begin{itemize}
\item[(A)] There exists a path in $\mathcal{C}_0$ connecting $g$ to a metric $g_2$ with positive Ricci curvature and totally geodesic boundary. 
\item[(B)] There exists a path in $\mathcal{C}_0$ between $g_{2}$ and the metric of the standard round metric on a hemisphere.
\item[(C)] One can deform the above paths into a path in $\mathcal{C}$ from $g$ to the metric of a strictly convex round spherical cap.
\end{itemize}

\subsubsection{\bf A: connecting $g$ to a metric with totally geodesic boundary}\label{stepA}
\begin{proposition}\label{propmain}
Given a metric $g \in \mathcal{C}$, there exists a path in $\mathcal{C}_0$ connecting $g$ to a metric $g_2$ with positive Ricci curvature and totally geodesic boundary.
\end{proposition}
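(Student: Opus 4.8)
The plan is to follow the idea of Perelman (used in the context of the positive-mass-type constructions for manifolds with boundary) to deform $g$ in a collar neighborhood of $D=\partial M$ so as to flatten the second fundamental form to zero, while controlling the sign of the Ricci curvature and the convexity of the boundary throughout the deformation. First I would fix boundary normal (Fermi) coordinates: write a collar neighborhood of $D$ as $D\times[0,\delta)$ with the metric in the form $g = dr^2 + g_r$, where $g_r$ is a family of metrics on $D$, $g_0$ is the induced metric, and the second fundamental form of the level set $\{r=r_0\}$ (with respect to the inward normal $\partial_r$) is $\tfrac12\partial_r g_r$. Strict convexity of $\partial M$ with respect to the outward normal means $\II = -\tfrac12\partial_r g_r|_{r=0}$ is negative definite, i.e. $\partial_r g_r|_{r=0}$ is positive definite.

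The deformation I would use keeps the normal part $dr^2$ fixed and replaces $g_r$ by a new family $\tilde g_r$ that agrees with $g_r$ for $r$ near $\delta$, but near $r=0$ is reparametrized/modified so that $\partial_r\tilde g_r|_{r=0}=0$ (totally geodesic boundary). The cleanest way is a two-parameter construction: introduce a family of metrics interpolating, via a cutoff function $\varphi_s(r)$, between the original radial profile and a ``bent'' profile that has horizontal tangent at $r=0$; concretely one can replace $r$ in the collar by a function $\psi_s(r)$ with $\psi_s(r)=r$ near $r=\delta$, $\psi_s'(0)=0$, and $\psi_s$ converging to the identity as $s\to 0$, and then scale the collar width so the new profile is a genuine smooth metric on all of $M$. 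One then has to compute $\Ric$ of the deformed metric. Using the standard Riccati/Gauss-Codazzi formulas for metrics of the form $dr^2+g_r$ — the Ricci curvature in the normal direction involves $-\tfrac12\tr(g_r^{-1}\partial_r^2 g_r) + \tfrac14|g_r^{-1}\partial_r g_r|^2$, the mixed terms involve divergences of the second fundamental form, and the tangential terms combine the intrinsic Ricci of $g_r$ with first and second $r$-derivatives — one checks that, because $g$ had \emph{strictly} positive Ricci curvature (an open condition) and \emph{strictly} convex boundary, for $s$ small enough the error terms introduced by the deformation are small in $C^2$ on a collar of controlled size, so positivity of $\Ric$ is preserved and the boundary stays convex (degenerating exactly to totally geodesic at the endpoint). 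The path $s\mapsto \tilde g_s$ then lies in $\mathcal{C}_0$ (we allow non-strict convexity precisely to accommodate the totally geodesic endpoint), connecting $g=\tilde g_0$ to $g_2 := \tilde g_1$, which has positive Ricci curvature and totally geodesic boundary.

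I expect the main obstacle to be the curvature bookkeeping: one must choose the bending profile $\psi_s$ and the cutoff scales so that the \emph{second} $r$-derivatives that enter $\Ric$ do not pick up a large negative contribution at the moment the boundary is made totally geodesic. The natural fix is to do the flattening in two stages — first shrink the collar (this is essentially a shift, already available in $\mathcal{C}$, and it does not change the sign of anything) to make the ambient geometry in the collar as close to a product as desired, and only then bend the radial coordinate; in the near-product regime the dangerous terms are genuinely small and the sign conditions are stable. One should also verify the endpoint metric $g_2$ is smooth across $D$, which follows since the construction only alters $g$ in the interior of the collar and matches $g$ to infinite order near the inner boundary of the collar. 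Finally, everything is done $\text{Diff}_+$-equivariantly (the construction depends only on $g$ through the distance function to $D$ and intrinsic collar data), so it descends to the moduli space, as needed for the later steps (B) and (C).
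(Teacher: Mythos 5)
Your construction skeleton (flatten $\partial_r g^r$ at the boundary inside a collar while keeping the $dr^2$ part) is the right one, but the mechanism you invoke to control the Ricci curvature is wrong, and that control is exactly the heart of the proposition. Making the boundary totally geodesic is \emph{not} a small perturbation: the second fundamental form must move from a definite tensor to $0$, so the first radial derivative of the metric at the boundary changes by an amount bounded below, and if the bending is supported in a collar of width $\rho$ then the second radial derivative is of size $\rho^{-1}$ somewhere in the collar. Hence ``positive Ricci is an open condition and the deformation is $C^2$-small for $s$ small'' cannot justify the endpoint (nor the later portion of the path), and your proposed remedy --- shift first so that ``the ambient geometry in the collar is as close to a product as desired'' --- is not available: shifts of a metric in $\mathcal{C}$ keep the boundary strictly convex with second fundamental form bounded away from zero (that is precisely why they stay in $\mathcal{C}$), so the collar geometry is never close to a product and the $\rho^{-1}$ term never becomes small.

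What actually saves the construction is a sign, not smallness, and it comes from the strict convexity you are erasing. In the paper the collar metric is replaced by $\bar g = dr^2 + \bar g^r_\rho$ with $\bar g^r_\rho(x)=b(x)r^2+c(x)$ matching $g$ to first order at $r=-\rho$; convexity gives $b<0$, indeed $(\bar g^r_\rho)''<-(\Lambda/\rho)\,\bar g^r_\rho$, so the large second radial derivative enters the sectional curvatures
\begin{align*}
K(\partial_i,\partial_r)=-\tfrac12 \bar g_{ii}''+\tfrac14\,\bar g^{pl}\bar g_{ip}'\bar g_{il}'
\end{align*}
with a \emph{favorable} sign of size $\Lambda/\rho$, which for $\rho$ small dominates the bounded tangential curvature terms; this is Lemma \ref{lemmarc}, and it is where the work lies. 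The original positivity of $\Ric(g)$ is used only off the collar and to smooth the resulting $C^1$ metric across $\Sigma_{-\rho}$ (Ricci is linear in $\partial^2 g$). Two further points you would still need: the paper performs the interpolation on the double $M\#_D M$, so that the reflection symmetry makes $D$ totally geodesic and the double of $g_2$ smooth (needed for the Ricci-flow step B; your condition that only the first normal derivative vanishes at the boundary does not give a smooth double); and instead of verifying curvature positivity along every intermediate bent metric $\tilde g_s$, the paper connects $g$ to $g_2$ through shifts, using that $g_2$ coincides with $g$ at distance at least $\rho$ from $\partial M$, so the $\varepsilon$-shift of $g_2$ equals the $\varepsilon$-shift $g_1$ of $g$ and the resulting path lies in $\mathcal{C}_0$ (in $\mathcal{C}$ except at the endpoint $g_2$).
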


We prove Proposition \ref{propmain} by first constructing an appropriate metric  $g_2$ with positive Ricci curvature and totally geodesic boundary and then showing how to connect it to $g$ by a path in $\mathcal{C}_{0}$.  

The existence of $g_{2}$ is based on the following gluing construction. Suppose we were to glue two copies of $(M,g)$ along the boundary $\partial M$ so that, after smoothing the edges inside a neighborhood $U$ of the boundary, we obtain a smooth metric on the glued manifold (which is topologically a three-sphere) with positive Ricci curvature and a reflection symmetry across $\partial M$. Then, half of the glued manifold will be carrying a metric on the ball $B�^{3}$ with totally geodesic boundary (the latter because of the reflection symmetry), which is the metric $g_{2}$ we were looking for. Moreover, this gluing is done such that we can connect $g$ to $g_{2}$ by a path in $\mathcal{C}_{0}$. 

\begin{remark}
Our gluing method uses ideas from Perelman \cite{Per97}, where he shows that one can always glue two compact manifolds with positive Ricci curvature with isometric boundary while keeping the Ricci curvature positive, provided that the second fundamental forms of the boundary points in one of the manifolds are {\it strictly} bigger than the negatives of the mean curvatures of the corresponding boundary points of the other manifold. What we show is that one can carry out a similar construction when the two manifolds to be glued are strictly convex and identical and also that, if one is careful enough, the metrics involved can actually be connected by a path in $\mathcal{C}_{0}$.

\end{remark}

\noindent {\it Gluing and construction of $g_{2}$.} 
To construct $g_{2}$, we glue two copies of $M$ along the boundary $D$ to obtain a manifold $M\#_{D}M$ that is diffeomorphic to the three-sphere. The doubled metric on $M\#_{D}M$ is smooth away from the two-sphere $D$. We will abuse notation slightly and denote that metric on $M\#_{D}M$ also by $g$. On $M\#_{D}M$, consider the parametrization of a tubular neighborhood of $D$ given by equidistant surfaces that are two-spheres. Working in such neighborhood, for an $\varepsilon$ small enough, we can find a local expression for $g$ given by
$$g=dr^2+g^r,$$ 
where $r\in (-\varepsilon, \varepsilon)$ and $g�^{r}$ is a metric on the two-sphere $S�^{2}$. Here, $r$ is the signed-distance to $D$ in $M\#_{D}M$ with respect to $g$, which we can define once we choose an orientation for $D$. Since our final goal is to obtain a metric in $M$ and not in $M\#_{D}M$, it will be important for us to fix the orientation once and for all: we will always think of the half containing $\{r \leq 0\}\times S^2$ as our original manifold $M$. In particular, if we choose $\varepsilon>0$ small enough, since $(M,g)$ has strictly convex boundary, we can see that the equidistant surfaces $\Sigma_{r}$ (of signed-distance $r$ to $D$) obtained by fixing $r$ negative (respectively, $r$ positive) are smooth two-spheres isotopic to the boundary $D$ and strictly convex (respectively, concave) with respect to $g$.  
 
To start the gluing argument, we choose a parameter $\rho\in (0, \varepsilon)$ and consider the surfaces $\Sigma_{-\rho}$ and $\Sigma_{\rho}$. The idea is to interpolate the values of $g$ in $\Sigma_{-\rho}$ and $\Sigma_{\rho}$ to construct a new metric $\bar{g}$ which has positive Ricci curvature and a reflection symmetry across $D$. The new metric $\bar{g}$ on $[-\rho,\rho] \times D $ will have the form
 \begin{align*}
 \bar{g}= dr^2 + \bar{g}^r_\rho(x), 
 \end{align*} 
where $\bar{g}^r_\rho$ is a quadratic polynomial in $r$ interpolating, up to first order, between $g^{-\rho}$ and $g^\rho$. More precisely, let $(r,x)$ be local coordinates representing a tubular neighborhood near $D$. Then in these local coordinates, we have
$$\left(\bar{g}^r_\rho(x)\right)_{ij}=b_{ij}(x)r^{2}+c_{ij}(x), $$
where $b_{ij}$ and $c_{ij}$ are determined by imposing the condition that $\bar{g}$ matches the metric on $[-\rho,\rho]\times D$ to first order at the boundary. This only means that at the slices $\Sigma_{-\rho}$ and $\Sigma_{\rho}$, we must have (indices $i,j$ representing directions tangential to $D$)
 \begin{align}
\left. \left(\bar{g}^{r}_{\rho}(x)\right)_{ij} \right\vert_{r=-\rho}&=b_{ij}(x)\rho^{2}+c_{ij}(x)\label{geq1},\\
\left. {\partial_r} \left(\bar{g}^r_{\rho}(x)\right)_{ij} \right\vert_{r=-\rho}&=-2b_{ij}(x)\rho\label{geq2}.
 \end{align} 
From \eqref{geq1} and \eqref{geq2} we have that when $r=-\rho$,
 \begin{align}
 b_{ij}(x)&=-\frac{\left(g^{-\rho}\right)^{'}_{ij}(x)}{2\rho}\label{coeff1},\\
 c_{ij}(x)&=g^{-\rho}_{ij}(x)+\frac{\left(g^{-\rho}\right)^{'}_{ij}(x)}{2}\rho,\label{coeff2} 
 \end{align}  
where the prime derivatives are taken with respect to $\partial_r$. Observe that we have only prescribed a matching condition at the slice $\Sigma_{-\rho}$, because by symmetry, the matching condition at the slice $\Sigma_{\rho}$ will yield exactly the same coefficients. By convexity of $\Sigma_{-\rho}$, $(g^{-\rho})^{'}_{ij}(x)>0$, so $b_{ij}(x)<0$ for every $x\in D$. Moreover, using compactness of $D$, we can choose $\Lambda>0$ such that the eigenvalues of $(g^{-\rho})'(x)$ are bigger than $2\Lambda$ for every $x\in D$. This implies that
\begin{align}\label{eqper}
(\bar{g}^r_\rho)''<-\frac{\Lambda}{\rho}\bar{g}^r_\rho \quad \text{on}\,\, (-\rho,\rho)\times D.
\end{align}
Note that $\bar{g}$ as constructed above is a smooth metric except at the hypersurfaces $\Sigma_{-\rho}$ and $\Sigma_\rho$, where it is only ${C}^1$.


Regarding the curvature of $\bar{g}$, we show:

\begin{lemma}\label{lemmarc}
In the above, the parameter $\rho$ can be chosen small enough such that $\bar{g}$ has positive Ricci curvature wherever it is smooth.
\end{lemma}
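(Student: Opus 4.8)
The plan is to localize: away from the slab $(-\rho,\rho)\times D$ the metric $\bar g$ agrees with $g\in\mathcal C$, which already has positive Ricci curvature, so the whole matter reduces to the region where $\bar g=dr^2+\bar g^r_\rho$. There I would invoke the standard formulas for the Ricci tensor of a metric of the form $dr^2+g_r$, with $g_r:=\bar g^r_\rho$ a one-parameter family of metrics on $D$. Writing $A:=\tfrac12\partial_r g_r$ for the second fundamental form of the slices $\Si_r=\{r\}\times D$ and $H:=\tr_{g_r}A$ for their mean curvature, these identities have the shape
\begin{align*}
\Ric(\partial_r,\partial_r)&=-\tfrac12\tr_{g_r}(\partial_r^2 g_r)+|A|^2_{g_r},\\
\Ric(\partial_r,X)&=(\text{a first-order contraction of }\D^{g_r}\!A),\\
\Ric(X,Y)&=\Ric^{g_r}(X,Y)-\tfrac12(\partial_r^2 g_r)(X,Y)+(\text{terms quadratic in }A\text{ and }H),
\end{align*}
for $X,Y$ tangent to the slices; the precise statements, with signs pinned down (e.g.\ by testing on $dr^2+\sin^2 r\,g_{\Sph^2}$), I would record in the Appendix.

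Next I would control the right-hand sides as $\rho\to 0$ by a handful of uniform estimates, all constants depending only on $g$. By \eqref{coeff1}--\eqref{coeff2}, $c_{ij}=g^{-\rho}_{ij}+O(\rho)$ and $\rho\,b_{ij}=-\tfrac12(g^{-\rho})'_{ij}$; since $\Si_{-\rho}\to(D,g|_{\partial M})$ smoothly as $\rho\to 0$, the tensors $g^{-\rho}$, $(g^{-\rho})'$, $(g^{-\rho})''$ and their tangential derivatives are bounded uniformly, and strict convexity of $\partial M$ lets us fix $\Lambda>0$ with $(g^{-\rho})'\ge 2\Lambda\,g^{-\rho}$ for all small $\rho$. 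Hence: (i) $g_r=br^2+c$ is uniformly comparable to $g|_{\partial M}$ and has uniformly bounded intrinsic curvature, so $\Ric^{g_r}=O(1)$; (ii) $\partial_r g_r=2br$ satisfies $|\partial_r g_r|\le 2\rho|b|=O(1)$, so $A$, $H$, the quadratic-in-$A$-and-$H$ terms, and $\D^{g_r}\!A\sim r\,\D^{g_r}b$ are all $O(1)$; (iii) $\partial_r^2 g_r=2b$, and by \eqref{eqper} this is very negative, $\partial_r^2 g_r<-\tfrac{\Lambda}{\rho}\,g_r$.

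Substituting (i)--(iii) into the curvature formulas gives, for some $C=C(g)$ and all $\rho<\rho_0(g)$, the estimates $\Ric(\partial_r,\partial_r)\ge-\tfrac12\tr_{g_r}(\partial_r^2 g_r)>\Lambda/\rho$, $\Ric(X,X)\ge-\tfrac12(\partial_r^2 g_r)(X,X)-C|X|^2_{g_r}>\bigl(\tfrac{\Lambda}{2\rho}-C\bigr)|X|^2_{g_r}$, and $|\Ric(\partial_r,X)|\le C|X|_{g_r}$. The crux — this is Perelman's observation — is that the large negative second $r$-derivative of the slice metrics forces \emph{both} diagonal blocks of $\Ric_{\bar g}$ to blow up positively, while the mixed block merely stays bounded; so in a $\bar g$-orthonormal frame $\{\partial_r,e_1,e_2\}$ the Ricci tensor is $\rho^{-1}P+Q$ with $P$ positive definite, its eigenvalues bounded below by a positive constant independent of $\rho$, and $\|Q\|\le C$. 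Choosing $\rho$ small therefore makes $\Ric_{\bar g}>0$ on $(-\rho,\rho)\times D$, and combined with $\Ric_g>0$ elsewhere this proves the lemma.

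I expect the only genuine obstacle to be bookkeeping: getting the signs in the curvature identities right (so that $-\tfrac12\partial_r^2 g_r$, and not $+\tfrac12\partial_r^2 g_r$, appears in the two diagonal blocks), and verifying that each "$O(1)$" is uniform in $\rho$ — which is exactly what the smooth convergence $\Si_{-\rho}\to\partial M$, together with the strict convexity built into $g\in\mathcal C$ through \eqref{coeff1}--\eqref{coeff2} and \eqref{eqper}, supplies.
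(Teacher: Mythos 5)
Your proposal is correct and follows essentially the same route as the paper's appendix: compute the Ricci tensor of $dr^2+\bar g^r_\rho$ on the slab, observe via \eqref{eqper} that the very negative second $r$-derivative $2b$ forces the normal--tangential sectional curvatures (hence both diagonal Ricci blocks) to grow like $\Lambda/\rho$, and check that all remaining terms (intrinsic curvature of the slices via the Gauss equation, quadratic terms in $\bar g'$) are bounded uniformly in $\rho$, so a small $\rho$ wins. Your explicit treatment of the mixed term $\Ric(\partial_r,X)$ via the Codazzi-type block is a small extra step the paper leaves implicit, but it does not change the argument.
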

The proof of Lemma \ref{lemmarc} follows by a direct calculation and depends crucially on equation \eqref{eqper}. We refer the reader to the Appendix for details. In particular, note that equation \eqref{eqper} holds true in general dimension, not just in dimension three, so Lemma \ref{lemmarc} is true in any dimension two or higher.

In the remainder of this subsection, we consider the metric $\bar{g}$ restricted to the original manifold $M$. We note that if $d_{g}$ and $d_{\bar{g}}$ denote respectively the distance function with respect to $g$ and $\bar{g}$, then, since $g=dr�^{2}+g�^{r}$ and $\bar{g}=dr�^{2}+\bar{g}�^{r}$, we have for $r\in [-\varepsilon,0]$:
\begin{equation}\label{eq:sur}
\{x\in M\,|\, d_{\bar{g}}(x,\partial M) =-r\}=\{x\in M\,|\, d_{g}(x,\partial M)=-r \} =: \Sigma_{r}.
\end{equation}
It is also of interest to compute the second fundamental form of $\Sigma_{r}$ with respect to $g$ and $\bar{g}$ respectively. Direct calculation yields that\footnote{Here, the second fundamental form $\text{\rm II}$ is computed with respect to the outward-pointing normal, so $\text{\rm II}^r = -(g^r)'$; convexity of $\Sigma_r$, $r\in(-\varepsilon, 0)$, means that $(g^r)'>0$.}
\begin{align}
-\text{\rm II}�^{r}_{g} &= (g�^{r})'> 0,\,r\in [-\varepsilon,0];  \nonumber\\ 
-\text{\rm II}�^{r}_{\bar{g}} &= (\bar{g}�^{r})'>0,\, r\in [-\varepsilon,0); \quad \text{\rm II}�^{r}_{\bar{g}}\equiv 0,\, r=0. \label{eqremsur1}
\end{align}

Finally, to obtain the metric $g_2$, we smooth out $\bar{g}$ near $\Sigma_{- \rho}$ as follows\footnote{The smoothing construction follows an argument sketched by Perelman \cite{Per97}. Details can be found in \cite{Wan97}, page 26.}. For $r=-\rho$, note that $\bar{g}$ is $C^1$ and has one-sided second derivatives. Now, the Ricci curvature has the form
\begin{align*}
\Ric(\bar{g})=\bar{g}^{-1}\partial^2\bar{g} + Q(\partial \bar{g}, \bar{g}),
\end{align*}
which is linear in the second derivatives of $\bar{g}$. Since $\bar{g}$ is $C�^{1}$, we can interpolate between $\bar{g}$ and a smooth metric near the surface $\Sigma_{-\rho}$ and make it in such a way that the smooth metric will also have positive Ricci curvature. Moreover, since $\bar{g}=dr�^{2}+\bar{g}�^{r}$, the interpolation actually occurs along the factor $\bar{g}�^{r}$, for $r$ near $-\rho$. Being so, we can directly see that in the same local coordinates $g_{2} = dr^2 + \bar{g}_2^r$ will satisfy similar conditions as in \eqref{eq:sur} and \eqref{eqremsur1}, that is, 
\begin{align}
\Sigma_{r} & =\{x\in M\,|\, d_{g_{2}}(x,\partial M)=-r \}, r\in  [-\varepsilon,0];\label{eqg21} \\
-\text{\rm II}�^{r}_{{g_{2}}} & = (\bar{g}�^{r}_{2})'>0,\, r\in  [-\varepsilon,0);  \quad \text{\rm II}�^{r}_{{g_{2}}}\equiv 0,\, r=0. \label{eqg22}
\end{align}

\noindent{\it Connecting $g$ to $g_{2}$ by a path in $\mathcal{C}_{0}$}. Starting with $g \in\mathcal{C}$, for a choice of $\varepsilon$ small as above, the $\varepsilon$-shift of $g$, which we call $g_{1}$, belongs to $\mathcal{C}$ and is path-connected to $g$. Moreover, since $\rho<\varepsilon$, $g_{1}$ is also a shift of $g_{2}$, and the path connecting them belongs to $\mathcal{C}$, with the exception of $g_{2}$, which belongs to $\mathcal{C}_{0}$. This completes the proof of Proposition \ref{propmain}.

\subsubsection{\bf B: connecting $g_2$ to a metric of a round hemisphere by a path in $\mathcal{C}_{0}$} We next prove that it is possible to connect the metric $g_{2}$ to the round metric of a hemisphere of $\Sph�^{3}$. We accomplish this by using Ricci flow. By construction, and abusing notation slightly, we can consider the metric $g_{2}$ as a smooth metric with positive Ricci curvature on the doubled manifold $M\#_{D}M$ and with a reflection symmetry across $D$. We then run Ricci flow
$$\partial_{t}g=-2\Ric({g})$$
with $g(0)=g_{2}$. By work of Hamilton \cite{Ham82}, if we denote by $g(t)$, $t \in [0, T)$, the
unique maximal solution to the above Ricci flow, then the rescaled metrics $\hat{g}(t)=\frac{1}{4(T-t)}g(t)$ converge to a metric of constant curvature 1 as $t\nearrow T$. Since Ricci flow preserves isometries, in particular, it will preserve the reflection symmetry of $g_{2}$ across $D$. Therefore, $D$ remains a totally geodesic two-sphere in $M\#_{D}M$ with respect to $\frac{1}{4(T-t)}g(t)$. Restricting the metrics $\hat{g}(t)$ to $M$, we obtain a path in $\mathcal{C}_{0}$ from $g_{2}$ to the round metric $g_{h}$ of a hemisphere of $\Sph�^{3}$. 

\subsubsection{\bf C: connecting $g$ to a metric of a strictly convex round spherical cap by a path in $\mathcal{C}$}\label{stepC} Along the above path obtained by Ricci flow, the metrics $\hat{g}(t)$ are uniformly equivalent, \emph{i.e.}, there exists a constant $C>0$ such that $C^{-1}\hat{g}(0) \leq \hat{g}(t)\leq C \hat{g}(0)$ \cite{Ham82}. Therefore, arguing by compactness, we can find space-time neighborhood of $\partial M$ of the form $[-r_{0},r_0]\times S^2 \times [0,T]$, such that for each $t\in[0,T]$, the metric $\hat{g}(t)$ can be written as
\begin{align*}
\hat{g}(t)=dr^2+{\hat{g}}^{r}_{t}.
\end{align*}
Let $\varphi(r)$ be a smooth function of one real variable and of sufficiently small $C�^{2}$-norm,  supported in the interval $(-r_{0},r_{0})$, and such that $\varphi{'} (0)>0$. We perturb the metric $\hat{g}(t)$ by adding a term of the form $\eta\varphi(r){\hat{g}}^{r}_{t}$ to get
\begin{align}\label{defgtilde}
\tilde{g}(t)=\hat{g}(t)+\eta\varphi(r){\hat{g}}^{r}_{t},
\end{align}
where $\eta>0$ is a small number to be chosen. Note that this metric is well-defined in all of $M\#_{D}M\times[0,T]$ because $\varphi$ is supported in $(-r_{0},r_{0})$ and at the points whose $\hat{g}(t)$-distance to the boundary is less than $r_{0}$ we can write
\begin{align*}
\tilde{g}(t)=dr^2+\left(1+\eta\varphi(r)\right){\hat{g}}^{r}_{t}.
\end{align*}
This perturbation does not change the unit normal to the boundary. Because of this, the second fundamental form at the boundary $D$  with respect to $\tilde{g}(t)$ is given by 
\begin{align*}
-\text{\rm II}_{\tilde g (t)}&=\left.\frac{\partial}{\partial r}\right\vert_{r=0} \left(1+\eta\varphi(r)\right){\hat{g}}^{r}_{t} \\
&= \left(1+\eta\varphi(0)\right)\left.\left({\hat{g}}^{r}_{t}\right)'\right\vert_{r=0} + \left. \eta\varphi{'} (0)\hat{g}^r_{t}\right\vert_{r=0}\\
&=\eta\varphi{'}(0){\hat{g}}^0_{t}, 
\end{align*}
which says that $(M,\tilde{g}(t))$ has  strictly convex boundary $D$, since $\varphi{'}(0)>0$ and $\eta>0$ (we have used that the boundary is totally geodesic under $\hat{g}(t)$). Since positive Ricci is an open condition, we can choose $\eta>0$ small enough such that $\tilde{g}(t)$ has positive Ricci curvature for all $t\in[0,T]$, and this procedure deforms the path connecting $g_2$ and $g_h$ in $\mathcal{C}_0$ into a path connecting $\tilde{g}_2$ and $\tilde{g}_h$ in $\mathcal{C}$.

At this point,  given any $g\in\mathcal{C}$, we have constructed the following paths of metrics:
\begin{align}\label{eq:pathhemi}
g \xrightarrow{\alpha} g_1 \xrightarrow{\beta} g_2 \xrightarrow{\gamma} g(T) = g_h,
\end{align}
where $\alpha \in\mathcal{C}$; all the metrics along the path $\beta$ belong to $\mathcal{C}$, with the exception of the endpoint $g_{2}$; and $\gamma$ is a path in $\mathcal{C}_{0}$ given by Ricci flow, which can be deformed to a path $\tilde{\gamma}$ in $\mathcal{C}$ between the metrics $\tilde{g}_{2} = g_{2} + \eta\varphi(r)g^{r}_{2}$ and $\tilde{g}_{h}={g}_{h}+\eta\varphi(r){{g}}^{r}_{h}$ by the above procedure. 

To finish proving the claim that $\mathcal{C}$ is path-connected, we just need to construct paths in $\mathcal{C}$ connecting $\tilde g_2$ to a suitable shift of $g_2$, and $\tilde g_h$ to a suitable shift of $g_h$, respectively.

We note that, for $s\in[0,1]$,  the metrics
\begin{align}
\theta(s)=g_{2} + (1-s)\eta\varphi(r)g^{r}_{2}
\end{align}
induce a path between $\tilde{g}_{2}$ and $g_{2}$ that belongs to $\mathcal{C}$ for $s\in[0,1)$. Composing this path with a continuous families of shifts near $s=1$ will yield a path in $\mathcal{C}$ between $\tilde{g}_{2}$ and a small shift of $g_{2}$ which lies somewhere in path $\beta$. More precisely, even though the boundary of $M$ is totally geodesic with respect to the metric $g_{2} = \theta(1)$, we know by \eqref{eqg22} that for some small $\delta>0$, the $\delta$-shift of $g_{2}$ will make the boundary strictly convex. For all $\delta_0>0$ sufficiently small, the $\delta_0$-shift of $g_2$ belongs to the path $\beta$. Given $\delta_0$, there exists a definite $\delta_{1}>0$ (depending on $\delta_0$ but independent of $s$) such that the $\delta_{0}$-shift of $\theta(s)$ has \emph{strictly} convex boundary and positive Ricci curvature for any $s\in[1-\delta_{1},1]$. With this in mind, we construct a new path by deforming $\theta(s)$ through $\delta(s)$-shifts defined by
\begin{align*}
\delta(s)=\left\{
\begin{array}{cc}
0&\text{for}~0\le s<1-2\delta_1,\\
\frac{s-(1-2\delta_1)}{\delta_1}\delta_0&\text{for}~1-2\delta_1\le s<1-\delta_1,\\
\delta_{0}&\text{for}~1-\delta_1\le s\le 1.
\end{array}
\right.
\end{align*}  
Since the metrics $\theta(s)$ and $g_{2}$ share the same normal direction at points in the support of $\varphi(r)$, we see that by choosing $\delta_{0}>0$ small enough, the path of $\delta(s)$-shifts of $\theta(s)$ converges as $s\nearrow 1$ to the $\delta_{0}$-shift of $g_{2}$ which has strictly convex boundary and positive Ricci curvature. Thus, we have constructed a path, denoted by $\sigma$, in $\mathcal{C}$ connecting $\tilde g_2$ to the $\delta_0$-shift of $g_2$.

Analogously, we can shift the path
\begin{align}
\omega(s)={g}_{h}+(1-s)\eta\varphi(r){{g}}^{r}_{h}
\end{align}
between $\tilde{g}_{h}$ and ${g}_{h}$ to a path $\tau$ in $\mathcal{C}$ between $\tilde{g}_{h}$ and the $\delta_0$-shift (choosing $\delta_0$ smaller if necessary) of ${g}_{h}$ which will be a \emph{strictly} convex round spherical cap.

Therefore, we have the following paths of metrics, all belonging to $\mathcal{C}$:
\begin{align}\label{eq:completepath}
g \xrightarrow{\alpha} g_1 \xrightarrow{\beta} \delta_0\text{-shift of } g_2 \xrightarrow{\text{reversing } \sigma} \tilde{g}_2 \xrightarrow{\tilde \gamma} \tilde{g}_h \xrightarrow{\tau} \delta_0\text{-shift of } g_h,
\end{align}
where $\beta$ is the restriction of $\beta$ from $g_1$ to $g_2$ to the path terminating at the $\delta_0$-shift of $g_2$. Thus, $\mathcal{C}$ is path-connected.

 
 \subsection{The moduli space $\mathcal{C}/ \text{Diff}_+$ is contractible}\label{contra} By \cite{Fis70} and \cite{Mil59}, it is enough to show that all homotopy groups of $\mathcal{C}/ \text{Diff}_+$ vanish. We already proved that $\pi_0(\mathcal{C}/ \text{Diff}_+)$ is trivial.

Let $k$ be a positive integer, $S^k$ the $k$-sphere, and $h:S^k \rightarrow \mathcal{C}/ \text{Diff}_+$ any smooth map. Since the image $h(S^k)$ represents a compact set of equivalent classes of metrics modulo (orientation-preserving) diffeomorphisms, we may fix small enough gluing parameters $\varepsilon, \rho>0$ (cf. Section \ref{stepA}) which we can use simultaneously for all equivalent classes of metrics in $h(S^k)$. In particular, for any fixed $0<\xi \leq \varepsilon$, we have a well-defined $\xi$-shift for each equivalent class of metrics in  $h(S^k)$ and, moreover, it is not hard to check that these shifts depend continuously on the equivalence class of metrics in $ \mathcal{C}/ \text{Diff}_+$. Therefore, we have a well-defined continuous family of paths $\alpha,\beta$ in $\mathcal{C}_0/ \text{Diff}_+$ such that, as in equation \eqref{eq:pathhemi}, continuously deform each equivalence class of metrics in $h(S^k)$ to an equivalence class of metrics with positive Ricci curvature and totally geodesic boundary. 
 
As before, we double the manifold and run Ricci flow. As well-known to experts, by now standard arguments imply that Ricci flow continuously retracts, modulo diffeomorphisms, any compact set of equivalent classes of metrics with positive Ricci curvature to the equivalent class of the round metrics. This is used to show that the moduli space of metrics with positive Ricci curvature is contractible, if non-empty. Hence, in our setting, we get a well-defined continuous family of paths $\gamma$ in $\mathcal{C}_0/ \text{Diff}_+$ such that, when composed with $\alpha$ and $\beta$ as in equation \eqref{eq:pathhemi}, continuously deform each equivalence class of metrics in $h(S^k)$ to the equivalence class of the round metric on the hemisphere. 

Finally, we note that the family of paths $\gamma \circ \beta \circ \alpha $ constructed above in $\mathcal{C}_0/ \text{Diff}_+$ can be continuously deformed to a family of paths in the moduli space $\mathcal{C}/ \text{Diff}_+$ from $h(S^k)$ to the equivalence class of a strictly convex round spherical cap metric. This can be seen from the fact that the deformation carried out in equation \eqref{eq:completepath} depends continuously on the initial equivalent class of metrics, so that we can fix uniform parameters $\delta_0,\delta_1>0$ for a compact set of equivalent classes of metrics modulo diffeomorphisms. This shows that every smooth map $h:S^k \rightarrow \mathcal{C}/ \text{Diff}_+$ is homotopically trivial and thus finishes the proof of Theorem \ref{thmrc+}.

\section{Proof of Theorem \ref{thmmain}}\label{sect3}
Let $\mathcal{D}$ denote the space of smooth Riemannian metrics on the three-ball ${M}^3$ with non-negative Ricci curvature and strictly convex boundary. The proof of Theorem \ref{thmmain} is quite similar to the proof of Theorem \ref{thmrc+} once we show how to continuously deform any metric $g\in \mathcal{D}$ near $\partial M =D$ keeping the boundary strictly convex and such that the deformed metrics all have positive Ricci curvature near $D$, and non-negative elsewhere. Indeed, we note that the positivity of Ricci curvature was only used in three instances: in the smoothing of the metric $\bar{g}$ near surfaces $\Sigma_{\pm \rho}$; in the convergence of three-dimensional Ricci flows; and in the construction of the path $\tilde\gamma$ (cf. Section \ref{stepC}). Since non-negative Ricci curvature is preserved under Ricci flow \cite{Ham82}, and the smoothing and perturbation occur only near $D$, we see that the same reasoning still holds if now Ricci curvature is strictly positive just near the boundary and non-negative elsewhere. 

Let $g\in \mathcal{D}$. As before, we consider a neighborhood of the boundary of the form $[-2\varepsilon,0]\times D$, where $\varepsilon>0$ is a small number to be fixed, and write $g$ as
$$g = dr^2+g^r.$$
We next let $f^\varepsilon$ be the function defined on $M$ by 
\begin{align*}
 f^\varepsilon & = \left\{
   \begin{array}{rr}
    \exp \left( - \frac{1}{(r+\varepsilon)^2}\right), & \text{on }  (-\varepsilon, 0]\times D,\\
    0, & \text{otherwise}.
   \end{array}
   \right.
\end{align*}
The function $f^\varepsilon$ is clearly smooth. Furthermore, it has the following property.

\begin{lemma}\label{lemmaconvex}
 For a sufficiently small $\varepsilon>0$ the function $f^\varepsilon$ satisfies 
 $$\He_g(f^\varepsilon)\geq0.$$
 Moreover, $\Delta_g f^\varepsilon>0$ on  $(-\varepsilon,0]\times D$.
\end{lemma}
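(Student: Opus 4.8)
The plan is to use the collar splitting $g = dr^{2}+g^{r}$ on $[-2\varepsilon,0]\times D$, in which $f^\varepsilon$ depends only on $r$, and to reduce both assertions to a one-variable computation combined with the strict convexity of the equidistant surfaces $\Sigma_r$ near $D$. On $(-\varepsilon,0]\times D$ write $f^\varepsilon = \phi(r)$ with $\phi(r) = \exp\!\big(-(r+\varepsilon)^{-2}\big)$; since $\phi$ and all of its derivatives tend to $0$ as $r\searrow-\varepsilon$, the tensor $\He_g(f^\varepsilon)$ vanishes identically on $[-2\varepsilon,-\varepsilon]\times D$ and off the collar, so it is enough to check the inequalities on $(-\varepsilon,0]\times D$. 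With $u = r+\varepsilon\in(0,\varepsilon]$ a direct computation gives
\[
\phi'(r) = \frac{2}{u^{3}}\,e^{-1/u^{2}} > 0, \qquad \phi''(r) = \frac{2}{u^{4}}\Big(\frac{2}{u^{2}}-3\Big)e^{-1/u^{2}},
\]
so, shrinking $\varepsilon$ so that $\varepsilon^{2}<2/3$, we obtain $\phi''>0$ throughout $(-\varepsilon,0]$.

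Next I would compute the Hessian from the warped form of $g$. Using that $\D f^\varepsilon = \phi'(r)\,\partial_r$, that $\partial_r$ is a unit geodesic field (so $\D_{\partial_r}\partial_r = 0$), and that $\la\D_X\partial_r,Y\ra = \tfrac12(g^{r})'(X,Y)$ for $X,Y$ tangent to $\Sigma_r$, one obtains, after splitting an arbitrary vector as $X = a\,\partial_r + X^{\top}$ with $X^{\top}$ tangent to $\Sigma_r$,
\[
\He_g(f^\varepsilon)(X,X) = \phi''(r)\,a^{2} + \tfrac12\,\phi'(r)\,(g^{r})'(X^{\top},X^{\top}).
\]
Because $g$ has strictly convex boundary and $D$ is compact, after shrinking $\varepsilon$ once more every surface $\Sigma_r$ with $r\in[-\varepsilon,0]$ is strictly convex, i.e.\ $(g^{r})'>0$ on $T\Sigma_r$; this is the only genuinely geometric ingredient, and it is the same kind of smallness already exploited in Section~\ref{stepA}. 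Both terms on the right are then $\geq 0$, so $\He_g(f^\varepsilon)\geq 0$ on $(-\varepsilon,0]\times D$ (in fact the quadratic form there is positive definite, the non-strictness in the statement coming only from the region where $f^\varepsilon\equiv0$); together with the reduction above this gives $\He_g(f^\varepsilon)\geq 0$ on all of $M$.

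For the Laplacian I would trace the last identity over an orthonormal frame $\{\partial_r,e_1,e_2\}$ adapted to the splitting, which yields
\[
\Delta_g f^\varepsilon = \phi''(r) + \tfrac12\,\phi'(r)\,\tr_{g^{r}}\!\big((g^{r})'\big) \qquad \text{on }(-\varepsilon,0]\times D.
\]
Here $\phi''>0$ and $\phi'>0$ by the first paragraph, while $\tr_{g^{r}}\big((g^{r})'\big)>0$ since $(g^{r})'$ is positive definite; hence $\Delta_g f^\varepsilon>0$ on $(-\varepsilon,0]\times D$, as claimed. I expect the only delicate point to be the two-fold smallness requirement on $\varepsilon$ — the explicit bound $\varepsilon^{2}<2/3$ guaranteeing $\phi''>0$, and the geometric bound forcing all the $\Sigma_r$ to inherit strict convexity from $\partial M$ — together with keeping track of the sign and normalization conventions for the second fundamental form as in \eqref{eqremsur1}; apart from this the argument is a routine computation.
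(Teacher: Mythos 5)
Your proposal is correct and follows essentially the same route as the paper: the collar splitting $g = dr^2 + g^r$, the explicit signs of $\phi'$ and $\phi''$ for $\varepsilon$ small, and the strict convexity $(g^r)'>0$ of the equidistant surfaces giving the tangential part of the Hessian. The only cosmetic difference is in the Laplacian step, where the paper notes that $\partial_{rr}f^\varepsilon$ dominates any bounded multiple of $\partial_r f^\varepsilon$ for small $\varepsilon$, while you simply observe that both trace terms are already positive once convexity is in hand.
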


\begin{proof}
It is clear that $\He_g(f^\varepsilon)\equiv 0$ everywhere outside  $(-\varepsilon,0]\times D$. Let $(r,x)$ be local coordinates in $[-\varepsilon, 0] \times D$. In such coordinates, the Hessian of $f$ reduces to
\begin{align*}
\He_g f^\varepsilon & = \partial_{rr} f^\varepsilon dr\otimes dr - \Gamma^0_{ij}\partial_r f^\varepsilon dx^i \otimes dx^j\\
& = \partial_{rr} f^\varepsilon dr\otimes dr - \partial_r f^\varepsilon \text{\rm{II}}(r), 
\end{align*}
where $\text{\rm{II}}(r)$ is the second fundamental form for $\Sigma_{r}$ with respect to metric $g$ and is strictly negative for $\varepsilon$ sufficiently small such that $\Sigma_{\varepsilon}$ is strictly convex. Therefore, choosing $\varepsilon$ smaller if necessary, the derivatives $\partial_r f^\varepsilon$ and $\partial_{rr} f^\varepsilon$ will be positive, and therefore $\He f^\varepsilon \geq 0$. Lastly, we see on  $(-\varepsilon,0]\times D$ there holds\footnote{Since given any real number $A$, there exists $a=a(A)>0$ such that for $x\in(0,a]$, the function $p(x)= \exp\pl-x^{-2}\pr$ satisfies $p''(x) - A p'(x) >0$.}
$$\Delta_g f^\varepsilon = \partial_{rr} f^\varepsilon - H(r) \partial_r f^\varepsilon >0.$$
\end{proof}

From here on, we fix $\varepsilon$ so that Lemma \ref{lemmaconvex} holds true and, to shorten notation, write $f^{\varepsilon}$ as $f$.

We now define a one-parameter family of metrics $g^s$, $s\in\mathbb{R}$, by
\begin{align}\label{eq:gs}
 g^s & = e^{-2sf} g.
\end{align}
Let $\Ric(g^s)$ denote the Ricci curvature tensor of the metric $g^s$. Then, as in \cite[Section 6.5]{MaNuSm13} and by Lemma \ref{lemmaconvex}, we have
\begin{align*}
 \left. \partial_s \right\vert_{s=0} \Ric(g^s) & = \He_g f + \Delta_g f g \geq 0 \quad \text{on } M.
\end{align*}
Moreover, again by Lemma \ref{lemmaconvex}, we have
\begin{align*}
 \left. \partial_s \right\vert_{s=0} \Ric(g^s) & = \He_g f + \Delta_g f g > 0 \quad \text{on } (-\varepsilon,0]\times D.
\end{align*}
So for $s_1$ sufficiently small, $g^s$ will have positive Ricci curvature on $(-\varepsilon,0]\times D$ for all $s\in[0,s_1]$.

We now check the second fundamental form of the metric $g^s$. Choosing $\partial_r$ to be the (outward-pointing) unit normal vector, and using the index $0$ to denote the direction in $\partial_r$, one computes that
\begin{align*}
 {\rm II}(g^s)_{ij} & = \Gamma_{ij}^0(g^s), \\
 \Gamma_{ij}^0(g^s) & = \Gamma_{ij}^0(g) + g_{ij} s \partial_r f.
\end{align*}
Since when $s=0$, ${\rm II}(g)_{ij} = \Gamma_{ij}^0(g)<0$, we see that for $s_2$ sufficiently small, ${\rm II}(g^s)_{ij} < 0 $ for all $s\in[0,s_2]$. 

Therefore, setting $s_0=\min\{s_1,s_2\}$, we obtain the desired deformation of $g$ by considering $g^s$ for $s\in[0,s_0]$. Note that this deformation is invariant under the action of $\text{Diff}_+$ and thus we may show as before that $\mathcal{D} / \text{Diff}_{+}$ and $\mathcal{D}$ are path-connected.

Finally, we argue that all the higher homotopy groups of $\mathcal{D} / \text{Diff}_{+}$ must also vanish. Let $k$ be a positive integer and $h : S^k \to \mathcal{D} /\text{Diff}_{+}$ any smooth map. The image $h(S^k)$ represents a compact set of metrics modulo diffeomorphisms. Because the Ricci curvature and the boundary convexity are geometric invariants under diffeomorphisms, we can deform an equivalent class of metrics with non-negative Ricci curvature by a family of equivalent class of metrics of positive Ricci curvature while preserving the boundary convexity using equation \eqref{eq:gs}. Moreover, we can fix uniform parameters $\varepsilon, s>0$ for a compact set of metrics modulo diffeomorphisms since it is evident that the deformation given in equation \eqref{eq:gs} depends continuously on the equivalent class of metrics. Therefore, we have continuously deformed the compact set $h(S^k)$ in $\mathcal{D} /\text{Diff}_{+}$ to a compact set of equivalent classes of metrics in ${\mathcal{C}} /\text{Diff}_{+}$. Let's call this deformation $\sigma$. As we have proved that $\pi_k(\mathcal{C} /\text{Diff}_{+}) = 0$, the continuous map $\sigma\circ h: S^k \to \mathcal{C} /\text{Diff}_{+}$ is homotopically trivial, and hence every smooth map  $h: S^k \to \mathcal{D} /\text{Diff}_{+}$ is homotopically trivial. Therefore, Theorem \ref{thmmain} is proved.

\section{Free boundary minimal annuli \\ in convex three-manifolds and Theorem \ref{thmmns}}\label{sec:app}

We recall the main theorem of Maximo, Nunes, and Smith \cite{MaNuSm13}:

\begin{theorem}[\cite{MaNuSm13}]\label{thmmns13}
If $(M,g)$ is a smooth, compact, functionally strictly convex Riemannian three-manifold of non-negative Ricci curvature, then there exists a properly embedded annulus $\Sigma\subseteq M$ which is free boundary minimal with respect to $g$.
\end{theorem}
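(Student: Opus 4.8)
The plan is to reconstruct the degree-theoretic existence argument of Maximo, Nunes, and Smith \cite{MaNuSm13}. A properly embedded free boundary minimal annulus is, up to reparametrization, the image of a conformal harmonic immersion $u\colon (A_\ell,\partial A_\ell)\to(M,\partial M)$, where $A_\ell=\{z\in\mathbb C:1\le|z|\le\ell\}$ is the flat annulus of conformal modulus $\ell\in(1,\infty)$, subject to the free boundary condition $u(\partial A_\ell)\subset\partial M$ together with $\partial u/\partial\nu\perp T\partial M$ along $\partial A_\ell$, where $\nu$ is the outward unit conormal of $\partial A_\ell$ in $A_\ell$. First I would set up, for each $\ell$, the elliptic boundary value problem for free boundary harmonic maps from $A_\ell$ into $M$; for such a map the failure of conformality is measured by its Hopf differential, a holomorphic quadratic differential that is real along $\partial A_\ell$, and such differentials form a one-real-dimensional space, exactly matching the one-dimensional modulus parameter. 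After quotienting by the circle of rotations of $A_\ell$ and a generic perturbation, this produces an index-zero Fredholm map $\mathcal F$ from (a normalized slice of) the total space of free boundary harmonic maps fibered over $\ell\in(1,\infty)$ to the one-dimensional ``conformality defect'' bundle, whose zero set is exactly the set of parametrized free boundary minimal annuli in $(M,g)$.

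The heart of the matter is a compactness/properness statement: any sequence of free boundary minimal annuli in $(M,g)$, or in a convergent family of functionally strictly convex metrics of non-negative Ricci curvature, subconverges smoothly to an embedded free boundary minimal annulus whose conformal modulus lies in a fixed compact subinterval of $(1,\infty)$, with no loss of area, topology, or embeddedness. Both hypotheses enter essentially here. Non-negative Ricci curvature yields a uniform area (hence energy) bound and, via a Schoen-type curvature estimate together with the standard bubbling analysis, excludes interior spherical bubbles and forces the limit to be embedded with multiplicity one; it also forbids long thin necks, which bounds the modulus away from its ``long'' degeneration. Functional strict convexity confines every free boundary minimal surface to the interior away from its free boundary (by the maximum principle applied to the level sets of the convexity function), excludes half-sphere boundary bubbles and boundary concentration, and, through the monotonicity formula near the convex boundary, gives a definite lower bound on area, which in turn bounds the modulus away from its ``short'' degeneration. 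Consequently $\mathcal F$ is proper over every compact parameter interval, and its mod-$2$ (or, with orientations, integer) degree is well defined and invariant under deformations of $g$ within the class of functionally strictly convex metrics of non-negative Ricci curvature.

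Finally I would evaluate the degree and extract a solution. For each modulus $\ell$ one selects, by minimizing energy among free boundary maps from $A_\ell$ lying in the generating homotopy class of the annulus, a free boundary harmonic map $u(\ell)$, well behaved in $\ell$ outside finitely many values by the compactness above; the signed count of zeros of the resulting conformality defect $\ell\mapsto\mathcal F(\ell,u(\ell))$ over $(1,\infty)$ realizes the degree. Its value is pinned down either from the asymptotics of the selected maps as $\ell$ tends to the two extremes of the modulus interval — where the maps degenerate in a controlled way, to a doubled free boundary geodesic arc and to a thin band respectively, the strict convexity of $\partial M$ forcing the defect to opposite signs at the two ends — or, equivalently, by comparison with the explicit model of the Euclidean unit ball $B^3\subset\mathbb R^3$, whose free boundary minimal annulus is the critical catenoid, counted with multiplicity one; either way the degree is nonzero, so $\mathcal F$ vanishes somewhere, and by the compactness theory the corresponding map is a smooth conformal immersion whose image is a properly embedded annulus (genuinely an annulus, since the modulus is finite and bounded away from the degenerate values). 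The principal obstacle is the compactness/properness step: simultaneously excluding interior bubbling, boundary bubbling, collapse to a point, and degeneration of the conformal modulus at interior values, while upgrading the abstract solution to a properly embedded annulus, is the delicate technical core of the argument, and it is precisely there that non-negative Ricci curvature and functional strict convexity are indispensable.
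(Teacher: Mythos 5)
The first thing to say is that Theorem \ref{thmmns13} is not proved in this paper at all: it is quoted verbatim as the main theorem of \cite{MaNuSm13}, and the authors use its degree-theoretic machinery as a black box. What Section \ref{sec:app} actually proves is Theorem \ref{thmmns}: it combines the (cited) degree theory, which the authors stress only runs on an \emph{open} and \emph{path-connected} class of metrics such as positive Ricci with strictly convex boundary (this is where Theorem \ref{thmrc+} enters), with an Aubin--Ehrlich-type approximation of a non-negative Ricci metric by positive Ricci metrics and the compactness theory of Fraser--Li \cite{FrMa14} to pass to the limit. So your proposal is an attempt to reconstruct the proof of the cited work \cite{MaNuSm13}, not the argument this paper supplies, and it cannot be checked against a proof in this paper because there is none.

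Taken on its own terms as a reconstruction, the sketch has genuine gaps at exactly the points the present paper flags as the delicate ones. First, asserting that the degree ``is invariant under deformations of $g$ within the class of functionally strictly convex metrics of non-negative Ricci curvature'' is of no use unless one shows that this class is path-connected, so that the given $(M,g)$ can actually be joined to the model where the degree is computed; this is the ``necessary prerequisite'' emphasized in Section \ref{sec:app} (and the raison d'\^etre of the functional convexity hypothesis in \cite{MaNuSm13}), and your outline never addresses it. Relatedly, non-negative Ricci curvature is a closed, not open, condition, so the Fredholm/regular-value formalism does not apply to it directly; some perturbation or approximation scheme is needed and is missing. Second, the degree evaluation is unjustified: computing it ``by comparison with the Euclidean unit ball, whose free boundary minimal annulus is the critical catenoid, counted with multiplicity one'' presupposes that the critical catenoid is (up to the symmetries you quotient by) the \emph{only} free boundary minimal annulus in $B^3$ and that it is a nondegenerate zero; uniqueness of the critical catenoid is a well-known open problem, so the signed count in the model is not available this way, and the alternative claim that the conformality defect has opposite signs at the two ends of the modulus interval is asserted without any supporting analysis. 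Third, the properness/compactness step (uniform area and curvature bounds, exclusion of interior and boundary bubbling, and of degeneration of the conformal modulus) is only named, not argued; this is the technical core, and in the paper's own use of the theory the corresponding role is played by citing \cite{FrMa14}. As it stands, then, the proposal is a plausible roadmap but not a proof, and it diverges from how this paper actually handles the statement, namely by citation.
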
 

The notion of convexity used in \cite{MaNuSm13} can be stated as follows: $(M,g)$ is said to be {functionally strictly convex} whenever there exists a smooth function $f:M\rightarrow[0,1]$ which is strictly convex with respect to the metric $g$ and whose restriction to $\partial M$ is constant and equal to $1$ (recall that $f$ is said to be {strictly convex} with respect to a given metric whenever its Hessian is everywhere positive definite). A functionally strictly convex three-manifold is strictly convex in the usual sense and must be diffeomorphic to the three-ball. 

The interest of this concept lies in the fact that the space of metrics with non-negative (or positive) Ricci curvature and functionally strictly convex boundary is path-connected, which is a necessary prerequisite for the degree theoretic techniques used in \cite{MaNuSm13}. Moreover, the degree theoretic argument in \cite{MaNuSm13} works when the space of metrics is open, \emph{e.g.}, metrics with positive Ricci curvature. Thus, Theorem \ref{thmrc+} and the theory developed in \cite{MaNuSm13} allow us to conclude that any metric $g$ on the ball $M$ with \emph{positive} Ricci curvature and strictly convex boundary must contain a properly embedded annulus $\Sigma\subseteq M$ which is free boundary minimal with respect to $g$. The proof of Theorem \ref{thmmns13} can then be achieved if we can show that every metric $g$ with non-negative Ricci curvature and strictly convex boundary and be smoothly approximated by metrics $g_{k}$ with positive Ricci curvature and strictly convex boundary, since the compactness results of Fraser-Li \cite{FrMa14} would guarantee that the sequence of free boundary minimal annuli $\Sigma_{k}$ with respect to $g_{k}$ would converge, possibly after passing to a subsequence, to a free boundary minimal annulus $\Sigma$ with respect to $g$. This can be done by adapting an idea of Aubin \cite{Aub70} and Ehrlich \cite{Ehr76} to the case of manifolds with strictly convex boundary:

\begin{proposition}
Let $g$ be any smooth metric on the three-ball ${M}$ with non-negative Ricci curvature and strictly convex boundary. Then $g$ can be approximated in the $C^\infty$-topology by a sequence of smooth metrics $g_k$ with positive Ricci curvature and strictly convex boundary. 
\end{proposition}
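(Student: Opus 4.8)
The plan is to deform $g$ in two stages: first in a collar of $D=\partial M$, and then in the interior. For the collar we reuse the construction from the proof of Theorem \ref{thmmain}. Fix $\varepsilon>0$ so small that Lemma \ref{lemmaconvex} holds and the signed distance to $D$ furnishes genuine collar coordinates on $\{-2\varepsilon\le r\le 0\}$, and put $f=f^\varepsilon$. The conformal metrics $g^s=e^{-2sf}g$ of \eqref{eq:gs} then satisfy, for all $s>0$ small enough: $\Ric(g^s)\ge 0$ on $M$; $\Ric(g^s)>0$ on the open collar $(-\varepsilon,0]\times D$; $D$ is strictly convex with respect to $g^s$; and $g^s\to g$ in the $C^\infty$ topology as $s\to 0$. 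Hence it suffices to approximate each $g^s$ by metrics of positive Ricci curvature and strictly convex boundary; so we fix such an $s$, write $\bar g:=g^s$, and observe that the ``bad set'' $B:=\{x\in M:\ \Ric(\bar g) \text{ is not positive definite at } x\}$ is a compact subset of $\{r\le-\varepsilon\}$, which lies in the interior of the compact set $K:=\{x:\ d_{\bar g}(x,D)\ge\varepsilon/2\}$. We will modify $\bar g$ only inside the interior of $K$, leaving it unchanged on the collar $\{d_{\bar g}(\cdot,D)<\varepsilon/2\}\supseteq D$, so that strict convexity of $D$ is automatically retained, and we will arrange that the resulting metric has $\Ric>0$ on all of $K$, hence everywhere.

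The second stage is an adaptation of the deformations of Aubin \cite{Aub70} and Ehrlich \cite{Ehr76}. The open set $\Omega_0:=\{x:\ \varepsilon/2<d_{\bar g}(x,D)<\varepsilon\}$ is a nonempty seed on which $\Ric(\bar g)>0$ and which lies in the interior of $K$; since $K$ (a ball with a collar removed) is connected, we may pick a finite chain of small geodesic balls $B_1,\dots,B_N$ contained in the interior of $K$ with $B_1\subseteq\Omega_0$, with consecutive balls overlapping, and with $B\subseteq B_1\cup\dots\cup B_N$. We then construct metrics $\bar g=\bar g_0,\bar g_1,\dots,\bar g_N$ so that $\bar g_i$ agrees with $\bar g_{i-1}$ outside $B_i$, has $\Ric(\bar g_i)\ge0$ on $M$, has $\Ric(\bar g_i)>0$ on $\Omega_0\cup B_1\cup\dots\cup B_i$ (after a harmless shrinking of the balls), and satisfies $\|\bar g_i-\bar g_{i-1}\|_{C^\infty}$ arbitrarily small. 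Because positivity of Ricci curvature is an open condition, choosing the later perturbations small enough preserves the positivity gained at earlier steps; and since $B\subseteq B_1\cup\dots\cup B_N$, the final metric $\bar g_N$ has positive Ricci curvature on $K$, hence on $M$, has strictly convex boundary (it equals $\bar g$ near $D$), and lies within any prescribed $\delta$ of $\bar g$ in $C^\infty$. A diagonal choice of $s\to0$ and $\delta\to0$ then yields the desired approximating sequence $g_k$.

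The single inductive step is where the real work lies, and is the step I expect to be the main obstacle: given a metric with $\Ric\ge0$ on $M$ and $\Ric>0$ on an open set $U$, enlarge $U$ across a small ball $B_i$ meeting it, by a $C^\infty$-small perturbation supported in $B_i$, while keeping $\Ric\ge0$ everywhere. This is exactly Ehrlich's local deformation lemma, and the subtlety is that the metric may be Ricci-flat --- hence, in dimension three, flat --- on part of $B_i$, so that a generic small perturbation would make the Ricci curvature negative in some direction. One must instead perturb in a carefully chosen direction, for instance by a conformal factor $e^{2\varphi}$ with $\varphi$ supported in $B_i$ and chosen so that, on the part of $B_i$ outside $U$, the second-order terms $-\He_{\bar g}\varphi$ and $-(\Delta_{\bar g}\varphi)\,\bar g$ in the conformal variation of $\Ric$ dominate the first-order term $d\varphi\otimes d\varphi-|\D\varphi|_{\bar g}^2\,\bar g$; this is feasible because a function of small $C^0$-norm varying at a fixed length scale has $|\He_{\bar g}\varphi|$ comparable to $\|\varphi\|$ and thus much larger than $|\D\varphi|^2$. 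Verifying that such a $\varphi$ can be patched to zero near $\partial B_i$ without destroying the positivity already present on $U$, and that the perturbation can simultaneously be kept small, is the technical heart of the argument; once this local lemma is in hand, the chaining argument above completes the proof.
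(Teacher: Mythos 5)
Your argument is essentially the paper's: the collar deformation $e^{-2sf}g$ of Section \ref{sect3} to create positive Ricci curvature near the strictly convex boundary, followed by Aubin--Ehrlich local deformations supported strictly away from $\partial M$ to spread positivity through the interior, and a diagonal argument in $s$ and the approximation parameter. The paper discharges your inductive step simply by invoking Theorems 4.3 and 5.1 of \cite{Ehr76} (with balls of radius at most $\varepsilon/3$ so that the deformation stays away from $D$), which is also the cleanest way to settle the local lemma you flag as the technical heart; your heuristic conformal sketch is not needed for this, and the actual mechanism in \cite{Aub70, Ehr76} is that the negative part of the variation is confined to the region where the Ricci curvature is already strictly positive.
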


\begin{proof}
Given any $\varepsilon>0$, as argued in Section \ref{sect3}, we can construct a sequence of metrics $g^{s_i}$ converging to $g$ smoothly as $s_i\searrow 0$ where $g^{s_i}$ has non-negative Ricci curvature on $M$, positive Ricci curvature in $(-\varepsilon,0]\times D$, and strictly convex boundary. For each $g^{s_i}$, we construct a sequence of metrics $g^{s_i}_k$ converging to $g^{s_i}$ with $g^{s_i}_k$ having positive Ricci curvature on $M$ and strictly convex boundary. Then a diagonal argument would yield the sequence of metrics $g_{k}$ as claimed in the proposition.

Let $M' = M\setminus (-\varepsilon,0]\times D$. Then $M'$ is a compact subset of $M$. In particular, $\left.g^{s_i}\right\vert_{M'} = g$ and has non-negative Ricci curvature by hypothesis. We claim that there exist $\delta=\delta(g)>0$ and metrics $g^{s_i}_k$ on $M$ with the properties that $\left\vert g^{s_i}_k - g \right\vert_{C^\infty}<\delta 2^{-k}$ and $\left.g^{s_i}_k\right\vert_{M'}$ has positive Ricci curvature. This is achieved by deforming the metric on $M'$ (and near $\partial M'$ but strictly away from $D = \partial M$) using an idea of Aubin \cite{Aub70} and Ehrlich \cite{Ehr76}: Assuming $g$ has non-negative Ricci curvature, at a point where all Ricci curvatures are positive, one can find, by continuity, a small ball centered at that point such that all Ricci curvatures are positive on this small ball. One then deforms the metric locally in such a way that the positive Ricci curvature is spreaded over a slightly larger ball.

The case $k=0$ is essentially proved in \cite[Theorem 5.1]{Ehr76}. We note that the proof there uses \cite[Theorem 4.3]{Ehr76}, which is stated for $B_{g,R}(p)$ with $R\leq 1$. For our purpose, we want $B_{g,R}(p)$ with $R\leq \varepsilon/3$, and it is not hard to see the local deformation result \cite[Theorem 4.3]{Ehr76} holds for such smaller values of $R$. Consequently, when we apply the local deformation argument, we will only deform the metric in $M'\cup(-\varepsilon,-\frac{2}{3}\varepsilon)\times D$, and so the deformation is strictly away from $D$. One checks the proof of \cite[Theorem 5.1]{Ehr76} and realizes that it holds for $2^{-k}\delta$, $k\geq 1$. Indeed, the proof uses Lemma 3.1 and Theorem 3.5 in \cite{Ehr76}, both of which hold for metrics that are $\delta$-close (in $C^\infty$-topology) to $g$, and therefore also hold for metrics that are $2^{-k}\delta$-close to $g$.
\end{proof}

\section{Appendix}

\subsection{Proof of Lemma \ref{lemmarc}}

We work in dimension $n$ with $n\geq 2$. Since $\bar{g}$ agrees with $g$ everywhere, except in the set $[-\rho,\rho]\times D$, we only need to compute its Ricci curvature at the points in $[-\rho,\rho]\times D$. We introduce some notation: 
\begin{itemize}
\item We will use the index $0$ to denote the direction to $\partial_r$. 
\item We will use latin letters $i,j,k,\ldots,$ to denote directions tangential to $D$. 
\item We will use prime notation for derivatives with respect to  ${r}$.
\end{itemize}
With the above notation it is straightforward to check that the only non-zero Christoffel symbols for $\bar g = dr^2 + \bar{g}^r$ are 
\begin{align*}
\Gamma_{ij}^{k}(\bar{g})&=\Gamma^{k}_{ij}(\bar{g}^{r}),\\
\Gamma_{ij}^{0}(\bar{g})&=-\frac{1}{2}\partial_{r} \bar{g}^{r}_{ij}=-\frac{1}{2}\bar{g}^{'}_{ij},\\
\Gamma_{i0}^{k}(\bar{g})&=\frac{1}{2}(\bar{g}^{r})^{kl}\partial_{r} \bar{g}^r_{il}=\frac{1}{2}\bar{g}^{kl} \bar{g}_{il}^{'},
\end{align*} 
and therefore we have the following explicit expression for the components of the sectional curvature
\begin{align}
K(\partial_{i},\partial_{r})&=\text{\rm Rm}(\partial_{i},\partial_{r},\partial_{i},\partial_{r})=\bar{g}_{ik}R^{k}_{i00}\nonumber\\
&=\bar{g}_{ik}\left(\partial_{i}\Gamma_{00}^k-\partial_{r}\Gamma_{i0}^{k}+\Gamma^{k}_{i\alpha}\Gamma^{\alpha}_{00}-\Gamma^k_{0\alpha}\Gamma^{\alpha}_{i0}\right)\nonumber\\
&=-\frac{1}{2}\bar{g}^{''}_{ii} + \frac{1}{4}\bar{g}^{pl}\bar{g}_{ip}^{'} \bar{g}_{il}^{'} \label{eqsec1}.
\end{align}

From \eqref{eqsec1}, and by \eqref{geq1} and \eqref{eqper}, we get that
\begin{align*}
K(\partial_{i},\partial_{r}) & \ge \frac{\Lambda}{2\rho} \left( \bar{g} ^r_\rho\right)_{ii} + \bar{g}^{pl}(x) b_{ip}(x) b_{il}(x) r^2\\
& \ge \frac{c\Lambda}{\rho}
\end{align*}
for some $c>0$ for $(r,x)\in[-\rho,\rho]\times D$ since the coefficients of $\bar{g}^r$ are uniformly bounded in $\rho$ and $\bar{g}^{pl}(x) b_{ip}(x) b_{il}(x) r^2\ge 0$. This implies that 
\begin{align}\label{eqriccir}
\Ric(\partial_{r},\partial_r) \geq (n-1) \frac{c\Lambda}{\rho}.
\end{align}

Next, for a tangential directional $\partial_i$, we have that

\begin{align}
\Ric(\partial_i,\partial_i) &=  K(\partial_i, \partial_r) + \sum_j K(\partial_i,\partial_j) \nonumber \\
&\geq \frac{c\Lambda}{\rho} + \sum_j K(\partial_i,\partial_j). \label{eqriccitan}
\end{align}
By Gauss' equation,
\begin{align*}
K(\partial_{i},\partial_{j})=K_{\bar{g}^r}(\partial_{i},\partial_{j})+\frac{1}{4}\left( \bar{g}_{ij}^{'} \bar{g}_{ij}^{'}-\bar{g}_{ii}^{'} \bar{g}_{jj}^{'}\right),
\end{align*}
so $K(\partial_i,\partial_j)$ must be uniformly bounded in $\rho$. In fact, the coefficients of the metric $\bar{g}^r$ are uniformly bounded in $\rho$, and so are its derivatives in any tangential direction. This yields that $K_{\bar{g}^r}(\partial_{i},\partial_{j})$ is uniformly bounded in $\rho$. Since $g'$ is also uniformly bounded in $\rho$, so will be $K(\partial_i,\partial_j)$. Therefore, by choosing $C>0$ (possibly a different constant than that in \eqref{eqriccitan}, but still independent of $\rho$), we have:
\begin{align}
\Ric(\partial_i,\partial_i) \geq \frac{c\Lambda}{\rho}-C\rho^2.
\label{eqriccitan2}
\end{align}

Therefore, by \eqref{eqriccir} and \eqref{eqriccitan2}, we can choose $\rho$ sufficiently small to obtain a metric $\bar{g}$ with positive Ricci curvature everywhere except on $\Sigma_{\pm\rho}$.

\bibliographystyle{amsalpha}

\bibliography{bib}

\end{document}